\newcommand\Z{\mathbb{Z}}
\newcommand\R{\mathbb{R}}
\newcommand\cF{{\mathcal F}}
\newcommand\cV{{\mathcal V}}
\newcommand\cL{{\mathcal L}}
\theoremstyle{plain}
\newtheorem{theorem}{Theorem}[section]
\newtheorem{lemma}[theorem]{Lemma}
\newtheorem*{main thm}{Main Theorem}
\theoremstyle{definition}
\newtheorem{definition}[theorem]{Definition}
\newtheorem{example}[theorem]{Example}
\newtheorem{remark}[theorem]{Remark}
\theoremstyle{remark}
\begin{document}

\newenvironment{prooff}{\medskip \par \noindent {\it Proof}\ }{\hfill
$\square$ \medskip \par}
    \def\sqr#1#2{{\vcenter{\hrule height.#2pt
        \hbox{\vrule width.#2pt height#1pt \kern#1pt
            \vrule width.#2pt}\hrule height.#2pt}}}
    \def\square{\mathchoice\sqr67\sqr67\sqr{2.1}6\sqr{1.5}6}
\def\pf#1{\medskip \par \noindent {\it #1.}\ }
\def\endpf{\hfill $\square$ \medskip \par}
\def\demo#1{\medskip \par \noindent {\it #1.}\ }
\def\enddemo{\medskip \par}
\def\qed{~\hfill$\square$}


\title[Dynnikov Coordinates]
{Dynnikov Coordinates on Punctured Torus}

\author[A. Meral]
{Alev Meral}

\subjclass[2010]{57N05, 57N16, 57M50 }
\keywords{ Integral Lamination, Geometric Intersection Number, Dynnikov Coordinates, Punctured Torus }

\address{(A. M.) Department of Mathematics, D\.{ı}cle University, 21280
D\.{ı}yarbak{\i}r, Turkey}

\email{$alev.meral@dicle.edu.tr$}

\date{\today}

\begin{abstract}
We generalize Dynnikov coordinate system previosly defined on the standard punctured disk to an orientable surface of genus-$1$ with $n$ punctures and one boundary component. 
\end{abstract}

\maketitle

\setcounter{secnumdepth}{2}
\setcounter{section}{0}

\section{Introduction}

The aim of this paper is to generalize Dynnikov coordinates to a genus-$1$  surface with $n ~(n \geq 2)$ punctures and one boundary component.
\emph{Dynnikov coordinates} \cite{dynnikov02} is an effective way to coordinatize an integral lamination on a finitely punctured disk $D_n$  ~$(n \geq 3)$. It provides a bijection between the isotopy classes of integral laminations and $\Z^{2n-4} \setminus \{0\}$. Dynnikov coordinate system has been extensively used to solve various dynamical and combinatorial problems such as word problem in the braid group \cite{dehornoy02}, \cite{dehornoy08},  calculating the topological entropies of pseudo-Anosov braids \cite{yurttas11}, \cite{moussafir06} and computing the geometric intersection number of two integral laminations on $D_n$ \cite{yurttas18}.

 Throughout the paper, $S_n$ will denote a genus-$1$  surface with $n ~(n \geq 2)$ punctures and one boundary component . To coordinatize a given integral lamination on $S_n$, a system consisting of $3n+2$ arcs and a simple  closed curve on $S_n$ is used. Given an integral lamination $\cL$ (or a measured foliation $\cF$), at first we have introduced a vector in $\Z^{3n+3}_{\geq 0} \setminus \{0\}$ (or $\R^{3n+3}_{\geq 0} \setminus \{0\}$) using geometric intersection numbers  (or the measure assigned to these curves) with the curves in our system.  To uniquely determine every lamination we  have defined  Dynnikov coordinates on $S_n$  by considering linear combinations of these intersection numbers (see Section~\ref{our_coordinates}).

\section{Dynnikov Coordinates on $S_n$}\label{our_coordinates}
In this section, we describe Dynnikov coordinates  on  $S_n$. For this, we use the model shown in Figure~\ref{model}. 
\begin{figure}[!ht]
\centering
\psfrag{a1}[tl]{\scalebox{0.6}{$\scriptstyle{\alpha_{1}}$}}
\psfrag{a2}[tl]{\scalebox{0.6}{$\scriptstyle{\alpha_{2}}$}} 
\psfrag{a(2i-3)}[tl]{\scalebox{0.6}{$\scriptstyle{\alpha_{2i-3}}$}} 
\psfrag{a(2i-2)}[tl]{\scalebox{0.6}{$\scriptstyle{\alpha_{2i-2}}$}} 
\psfrag{a(2i-1)}[tl]{\scalebox{0.6}{$\scriptstyle{\alpha_{2i-1}}$}}
\psfrag{a(2i)}[tl]{\scalebox{0.6}{$\scriptstyle{\alpha_{2i}}$}} 
\psfrag{a(2i+1)}[tl]{\scalebox{0.6}{$\scriptstyle{\alpha_{2i+1}}$}} 
\psfrag{a(2i+2)}[tl]{\scalebox{0.6}{$\scriptstyle{\alpha_{2i+2}}$}} 
\psfrag{a(2n-1)}[tl]{\scalebox{0.6}{$\scriptstyle{\alpha_{2n-1}}$}}
\psfrag{a(2n)}[tl]{\scalebox{0.6}{$\scriptstyle{\alpha_{2n}}$}} 
\psfrag{b1}[tl]{\scalebox{0.6}{$\scriptstyle{\beta_{1}}$}} 
\psfrag{bi}[tl]{\scalebox{0.6}{$\scriptstyle{\beta_{i}}$}} 
\psfrag{b(i+1)}[tl]{\scalebox{0.6}{$\scriptstyle{\beta_{i+1}}$}} 
\psfrag{b(n+1)}[tl]{\scalebox{0.6}{$\scriptstyle{\beta_{n+1}}$}} 
\psfrag{c}[tl]{\scalebox{0.6}{$\scriptstyle{c}$}}
\psfrag{g}[tl]{\scalebox{0.6}{$\scriptstyle{\gamma}$}}
\psfrag{d(2i-1)}[tl]{\scalebox{0.6}{$\scriptstyle{{\color{blue}\Delta_{2i-1}}}$}}
\psfrag{d(2i)}[tl]{\scalebox{0.6}{$\scriptstyle{{\color{blue}\Delta_{2i}}}$}}
\psfrag{d(2i-2)}[tl]{\scalebox{0.6}{$\scriptstyle{{\color{blue}\Delta_{2i-2}}}$}}
\psfrag{d(2i+1)}[tl]{\scalebox{0.6}{$\scriptstyle{{\color{blue}\Delta_{2i+1}}}$}}
\psfrag{d(2n)}[tl]{\scalebox{0.6}{$\scriptstyle{{\color{blue}\Delta_{2n}}}$}}
\psfrag{d1}[tl]{\scalebox{0.6}{$\scriptstyle{{\color{blue}\Delta_{1}}}$}}
\includegraphics[width=0.65\textwidth]{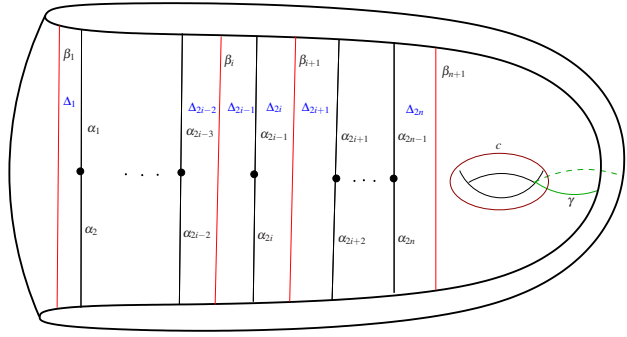}
\caption{ Curves on $S_n$ }\label{model}
\end{figure} 
Here, the arcs $\alpha_i  ~(1 \leq i \leq 2n)$ and $\beta_i  ~(1 \leq i \leq n+1)$ are similar to the $D_n$ case. That is, the end points of these arcs are either on the boundary or on the puncture. While $c$ is the longitude of the torus, $\gamma$ is the arc whose both end points are on the boundary. Also, note that $\gamma$ intersects with $c$ once transversally.

Let  $\cL_n$ be the set of integral laminations on $S_n$ and $\cL \in \cL_n$.  Throuhgout the paper, we always work with the minimal representative (an integral lamination in the same isotopy class intersecting with coordinate curves minimally) of $\cL$ and denote it by $L$. Let the vector $(\alpha_{1}, \cdots, \alpha_{2n}; \beta_{1}, \cdots, \beta_{n+1}; \gamma; c) \in \{\Z^{3n+3}_{\geq 0}\} \setminus \{0\}$ show the intersection numbers of $L$ with the corresponding arcs and the simple closed curve $c$. For example, $(4, 1, 3, 2, 4, 1; 3, 5, 5, 3; 3; 1)$ are the intersection numbers of the integral lamination $L$ depicted in Figure~\ref{ucgen_ornek}.
\begin{figure}[!ht]
\centering
\includegraphics[width=0.65\textwidth]{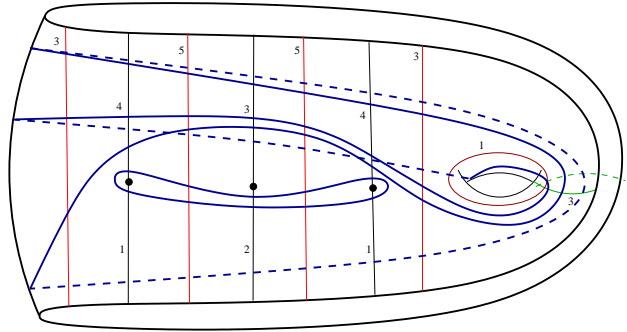}
\caption{ Intersection numbers with coordinate curves }\label{ucgen_ornek}
\end{figure}

If $L$ contains $p(c)$ many copies of $c$, then let
\begin{equation}\label{copy}
c = -p(c),
\end{equation}
 \noindent where $p(c) > 0$. Throughout the paper we define $c^{+}$ as  $\max(c, 0)$.

\subsection{Path Components on $S_n$}
In this section, we are going to introduce path components of an integral lamination $L$ on $S_{n}$ and derive formulas for the number of these components.

Let $U_i  ~(1 \leq i \leq n)$ be the region that is bounded by $\beta_i$ and $\beta_{i+1}$  (Figure~\ref{puncture_bilesen}) and $G$ be the region bounded by $\beta_1$, $\beta_{n+1}$ and the boundary of $S_n$ ~($\partial S_n$) (Figures ~\ref{genus_loop} and \ref{cikis_yon}). Since $L$ is minimal, there are $4$ types of path components in $U_i$ ~$(1 \leq i \leq n)$  as on the disk \cite{yurttas13}: \emph{Above component}; which has end points on $\beta_i$ and $\beta_{i+1}$ intersecting with $\alpha_{2i-1}$, \emph{below component}; which has end points on $\beta_i$ and $\beta_{i+1}$ intersecting with $\alpha_{2i}$, \emph{left loop component}; which has both end points  on $\beta_{i+1}$ intersecting with $\alpha_{2i-1}$ and $\alpha_{2i}$ (Figure~\ref{puncture_bilesen} (a)) and \emph{right loop component}; which has both end points are on $\beta_i$ intersecting  with $\alpha_{2i-1}$ and $\alpha_{2i}$ (Figure~\ref{puncture_bilesen} (b)).  There are $6$ types of path components in $G$. First three are \emph{$c$ curve}; bounding the genus of surface (Figure~\ref{genus_loop} (a)), \emph{front genus component}; which has both end points  on $\beta_{n+1}$  not intersecting with curve $c$ (Figure~\ref{genus_loop} (b)), \emph{back genus component}; having both endpoints on $\beta_{1}$ not intersecting with curve $c$  (Figure~\ref{genus_loop} (c)).  The other three components, called \emph{twisting} which have end points on $\beta_{1}$ and $\beta_{n+1}$ intersecting with curve $c$ (see Figure~\ref{cikis_yon}) are \emph{non-twist  component}; see Figure~\ref{cikis_yon} (a), \emph{negative twist component} which makes clockwise twist (See Figure~\ref{cikis_yon} (b)), and   \emph{positive twist  component} which makes counterclockwise twist (See Figure~\ref{cikis_yon} ~(c)).

\begin{figure}[!ht]
\centering
\psfrag{a}[tl]{\scalebox{0.6}{$\scriptstyle{a}$}}
\psfrag{b}[tl]{\scalebox{0.6}{$\scriptstyle{b}$}} 
\psfrag{a2i}[tl]{\scalebox{0.6}{$\scriptstyle{\alpha_{2i}}$}} 
\psfrag{a2i-1}[tl]{\scalebox{0.6}{$\scriptstyle{\alpha_{2i-1}}$}}
\psfrag{bi}[tl]{\scalebox{0.6}{$\scriptstyle{\beta_{i}}$}} 
\psfrag{bi+1}[tl]{\scalebox{0.6}{$\scriptstyle{\beta_{i+1}}$}} 
\psfrag{d(2i-1)}[tl]{\scalebox{0.6}{$\scriptstyle{{\color{blue}\Delta_{2i-1}}}$}}
\psfrag{d(2i)}[tl]{\scalebox{0.6}{$\scriptstyle{{\color{blue}\Delta_{2i}}}$}}
\psfrag{U(i)}[tl]{\scalebox{0.6}{$\scriptstyle{U_{i}}$}}
\includegraphics[width=0.65\textwidth]{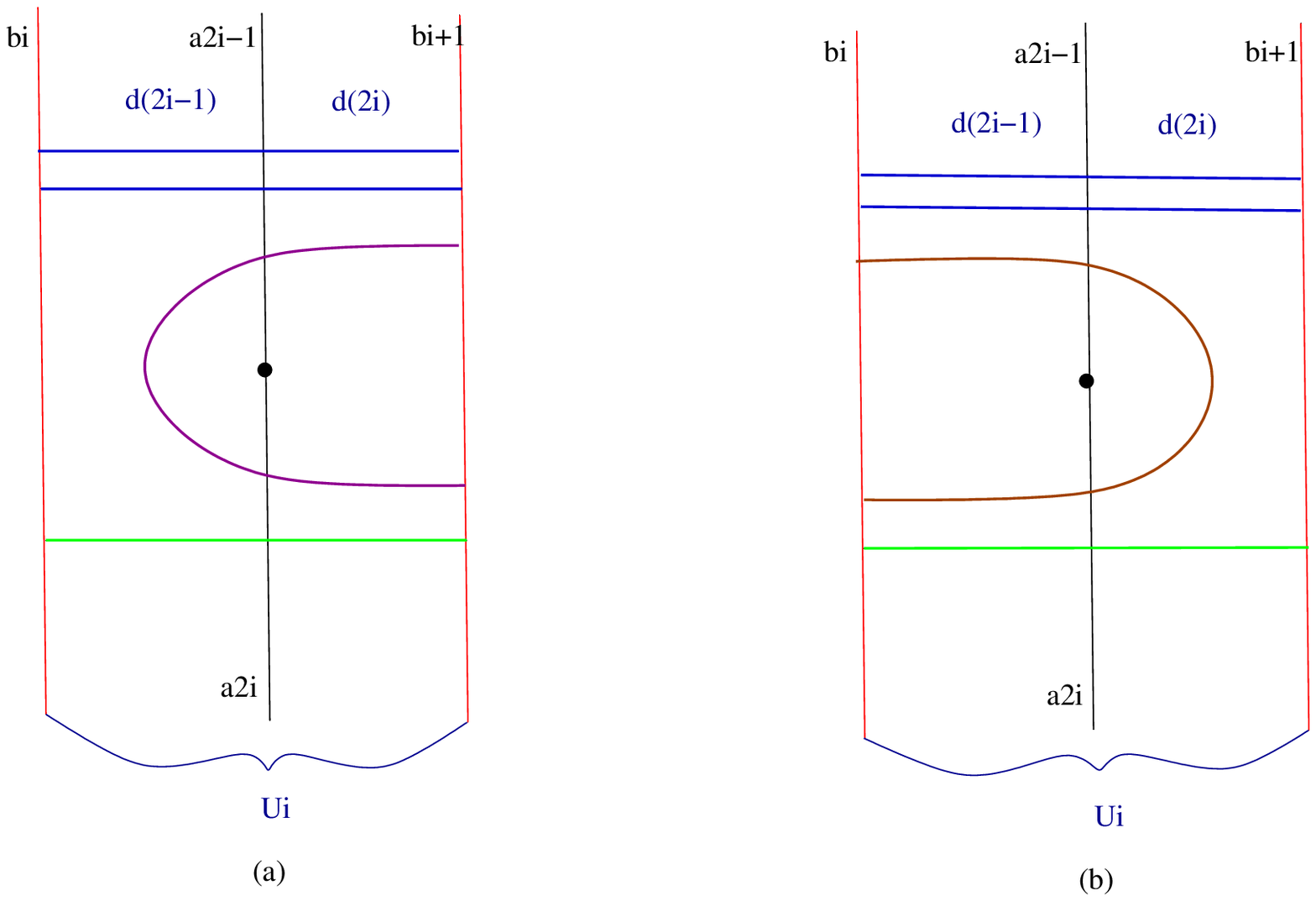}
\caption{Above and below components, left and right loop components in region $U_i$}\label{puncture_bilesen}
\end{figure}

\begin{figure}[!ht]
\centering
\psfrag{a}[tl]{\scalebox{0.55}{$\scriptstyle{a}$}}
\psfrag{b}[tl]{\scalebox{0.55}{$\scriptstyle{b}$}} 
\psfrag{g}[tl]{\scalebox{0.6}{$\scriptstyle{\gamma}$}}
\psfrag{t}[tl]{\scalebox{0.6}{$\scriptstyle{c}$}} 
\psfrag{bn+1}[tl]{\scalebox{0.6}{$\scriptstyle{\beta_{n+1}}$}}
\psfrag{b1}[tl]{\scalebox{0.6}{$\scriptstyle{\beta_{1}}$}} 
\includegraphics[width=0.9\textwidth]{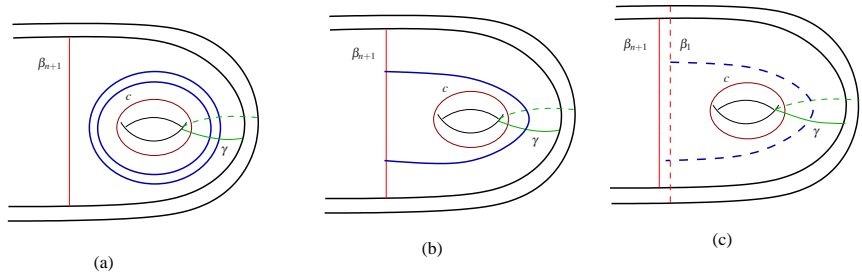}
\caption{(a) $c$ curves, (b) front genus component, (c) back genus component in region $G$ }\label{genus_loop}
\end{figure}

\begin{figure}[!ht]
\centering
\psfrag{a}[tl]{\scalebox{0.55}{$\scriptstyle{a}$}}
\psfrag{b}[tl]{\scalebox{0.55}{$\scriptstyle{b}$}} 
\psfrag{g}[tl]{\scalebox{0.6}{$\scriptstyle{\gamma}$}} 
\psfrag{t}[tl]{\scalebox{0.6}{$\scriptstyle{c}$}} 
\psfrag{bn+1}[tl]{\scalebox{0.6}{$\scriptstyle{\beta_{n+1}}$}}
\psfrag{b1}[tl]{\scalebox{0.6}{$\scriptstyle{\beta_{1}}$}} 
\includegraphics[width=0.9\textwidth]{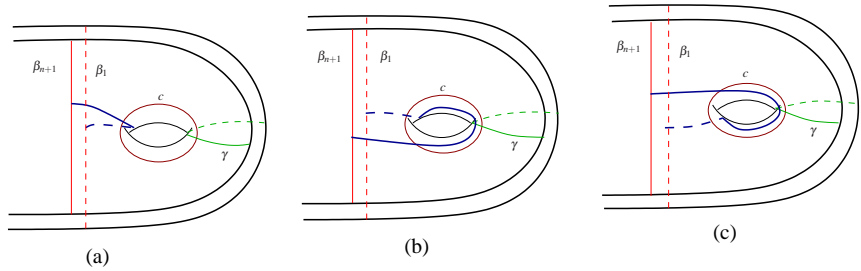}
\caption{ (a) Non-twist component, (b) Negative twist  component, (c) Positive twist  component.}\label{cikis_yon}
\end{figure}

%
%
%
%
%
%
%
%
%
%
%
%
%
%
\noindent A twisting component's twist number is the signed number of intersections with $\gamma$ curve.

\begin{remark}\label{not_c_cutting}
Since an integral lamination  $L \in \cL_{n}$ consists of simple closed curves that do not intersect each other, there can not be both  curve  $c$  and twisting components at the same time in the region $G$ (see Figure~\ref{paralel_cikis}). Also note that there are a uniform front genus and a uniform back genus component in the  region  $G$.
\begin{figure}[!ht]
\centering
\psfrag{a}[tl]{\scalebox{0.55}{$\scriptstyle{a}$}}
\psfrag{b}[tl]{\scalebox{0.55}{$\scriptstyle{b}$}} 
\psfrag{g}[tl]{\scalebox{0.6}{$\scriptstyle{\gamma}$}}
\psfrag{t}[tl]{\scalebox{0.6}{$\scriptstyle{c}$}}
\psfrag{bn+1}[tl]{\scalebox{0.6}{$\scriptstyle{\beta_{n+1}}$}}
\psfrag{b1}[tl]{\scalebox{0.6}{$\scriptstyle{\beta_{1}}$}}
\includegraphics[width=0.5\textwidth]{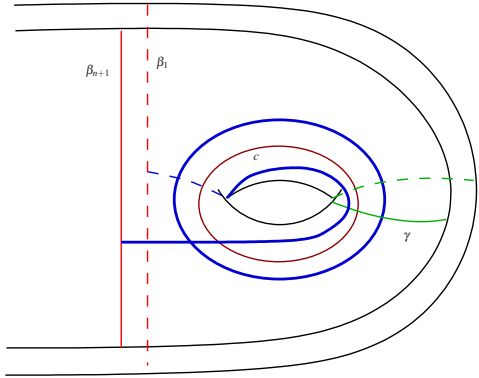}
\caption{ $L$ does not contain both  curve $c$ and twisting components at the same time. }\label{paralel_cikis}
\end{figure}
\end{remark}

\begin{remark}\label{num_cutting}
Note that the number  $c^{+}$  gives the number of twisting components.
\end{remark}

\begin{remark}\label{not_greater_1}
Since an integral lamination does not contain any self-intersections, directions of the twists  has to be the same. Also, in region $G$, 
the difference between the twist numbers of two different such components  can not be greater than $1$ (see Figure~\ref{fark_big_zero}).
\begin{figure}[!ht]
        \centering
        \psfrag{a}[tl]{\scalebox{0.55}{$\scriptstyle{a}$}}
        \psfrag{b}[tl]{\scalebox{0.55}{$\scriptstyle{b}$}}
        \psfrag{g}[tl]{\scalebox{0.6}{$\scriptstyle{\gamma}$}} 
        \psfrag{t}[tl]{\scalebox{0.6}{$\scriptstyle{c}$}}
        \psfrag{bn+1}[tl]{\scalebox{0.6}{$\scriptstyle{\beta_{n+1}}$}}
				\psfrag{b1}[tl]{\scalebox{0.6}{$\scriptstyle{\beta_{1}}$}} 
        \includegraphics[width=0.5\textwidth]{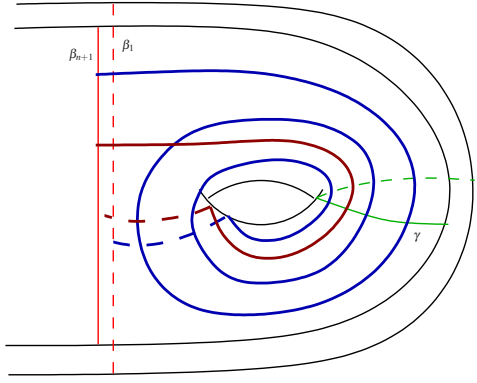}
        \caption{ If the difference between the twist numbers of two twisting components is greater than one, then they intersect.}\label{fark_big_zero}
        \end{figure}

If we denote the \emph{smaller twist number} by $t$ and the \emph{bigger twist number} by $t+1$, then the \emph{total twist number} ~$T$  in 
$G$ is the sum of the twist numbers of such components. Hence,  if the difference between twist numbers of any two twisting components is $0$, then  
$$T = tc^{+}.$$  
\noindent 
On the other hand, if the difference between twist numbers of any   two twisting components is $1$, then $$ T = m(t + 1) + (c^{+} - m)t $$  \noindent 
where $m \in \Z_{\geq0}$ is the number of twisting components with twist number  $t+1$, and  $ c^{+} - m $ is the number of  twisting components with twist number  $t$.
\end{remark}

Now, we calculate the path components  of $L$ in  $G$:
\begin{lemma}\label{cins}
Let $L$ be given with the intersection numbers  $(\alpha; \beta; \gamma; c)$, and the number of front genus components and the number of back genus components be $l$ and $l'$, respectively. Then,
\begin{eqnarray*}
l = \frac{\beta_{n+1} - c^{+}}{2} \quad \text{ and } \quad l' = \frac{\beta_{1} - c^{+}}{2}
\end{eqnarray*}
\end{lemma}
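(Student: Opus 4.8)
The plan is to prove both formulas simultaneously by a bookkeeping argument: count the endpoints that the arcs of $L$ lying in the region $G$ leave on $\beta_{n+1}$ and on $\beta_{1}$, organized according to the classification of path components of $L$ in $G$ that was set up just before the statement. Since $L$ is assumed to be in minimal position, the geometric intersection number $\beta_{n+1}$ equals the number of points of $L\cap\beta_{n+1}$, and each such point is an endpoint of exactly one arc of the restricted lamination $L\cap G$ (its other side lies in $U_{n}$, the region adjacent to $G$ across $\beta_{n+1}$); the same applies to $\beta_{1}$ with $U_{1}$. So the task reduces to summing, over the component types in $G$, how many endpoints each type carries on $\beta_{n+1}$ and on $\beta_{1}$.

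First I would tabulate these contributions using the six types listed above. A front genus component carries both of its endpoints on $\beta_{n+1}$ and none on $\beta_{1}$, contributing $2$ to $\beta_{n+1}$; with $l$ such (uniform, parallel) components this gives $2l$. A back genus component carries both endpoints on $\beta_{1}$ and none on $\beta_{n+1}$, contributing $2l'$ in total. Each twisting component (non-twist, negative twist, or positive twist) crosses each of $\beta_{1}$ and $\beta_{n+1}$ exactly once, since $L$ is minimal, so it contributes $1$ to each; by Remark~\ref{num_cutting} there are $c^{+}$ such components, contributing $c^{+}$ to each of the two counts. Finally a curve parallel to $c$ is a closed component sitting in the interior of $G$ and meets neither $\beta_{1}$ nor $\beta_{n+1}$, contributing $0$. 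Summing gives $\beta_{n+1}=2l+c^{+}$ and $\beta_{1}=2l'+c^{+}$, whence $l=(\beta_{n+1}-c^{+})/2$ and $l'=(\beta_{1}-c^{+})/2$. The case where $L$ contains copies of the curve $c$ needs only one remark: by Remark~\ref{not_c_cutting} there are then no twisting components, so $c^{+}=0$ and the identities reduce to $\beta_{n+1}=2l$, $\beta_{1}=2l'$, which is exactly what the count yields in that situation.

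The step I expect to be the main obstacle is the justification, in minimal position, that the arcs of $L\cap G$ are \emph{exactly} the enumerated types and cross $\beta_{1}$ and $\beta_{n+1}$ the stated number of times, with no stray intersections — in particular that the $c$-parallel curves (and the front/back genus components) can be isotoped off $\beta_{1}$ and $\beta_{n+1}$ as claimed. This is where minimality of $L$ and the specific geometry of the model (the longitude $c$ lying in the interior of $G$, and $\beta_{1},\beta_{n+1}$ bounding $G$) are genuinely used; once the classification of components in $G$ is granted, the remainder is the elementary linear count above.
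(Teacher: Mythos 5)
Your proposal is correct and follows essentially the same route as the paper: both count intersections along $\beta_{n+1}$ and $\beta_{1}$ component-type by component-type in $G$ (twice per front/back genus component, once per twisting component, none for $c$-parallel curves), yielding $\beta_{n+1}=2l+c^{+}$ and $\beta_{1}=2l'+c^{+}$. Your extra remark on the case where $L$ contains copies of $c$ (forcing $c^{+}=0$ via Remark~\ref{not_c_cutting}) is a harmless refinement of the same argument.
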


\begin{proof} 
$\beta_{n+1}$ intersects only with twisting (Figure~\ref{cikis_yon}) and front genus (Figure~\ref{genus_loop} (b)) components. Since $\beta_{n+1}$ intersects once with each twisting component and twice with each front genus component, $\beta_{n+1} = c^{+} + 2l$. From here, $l = \frac{\beta_{n+1} - c^{+}}{2}$ is derived. Similarly,  $\beta_{1}$ intersects only with twisting (Figure~\ref{cikis_yon}) and back genus (Figure~\ref{genus_loop} (c)) components. Since $\beta_{1}$  intersects once with each twisting component and twice with each back genus component, $\beta_{1} = c^{+} + 2l'$. Therefore, $l' = \frac{\beta_{1} - c^{+}}{2}$ is derived. 
\end{proof}

In the following theorem, we calculate the total twist number of twisting components:

\begin{lemma}\label{lem_total_twist}
Let $L$ be given with the intersection numbers  $(\alpha; \beta; \gamma; c)$, denoting the signed total twist number of twisting components by $T$. We have
\begin{align}\label{top_burgu}
|T|&= \left\{ \begin{array}{ll}
       0& \mbox{if $c^{+} = 0$},\\
         \gamma - \frac{\beta_{n+1} - c^{+}}{2} - \frac{\beta_{1} - c^{+}}{2} & \mbox{if $c^{+} \neq 0$.}   
         \end{array} \right.
\end{align}
\noindent
The sign of the negative twist component is $-1$ and the sign of the positive twist component is $1$. 
\end{lemma}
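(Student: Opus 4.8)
The plan is to split along the two cases in the statement and, in the nontrivial case, to decompose the geometric intersection number $\gamma$ of $L$ with the arc $\gamma$ according to the types of path components of $L$ that live in the region $G$, then read $|T|$ off this decomposition.

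First, suppose $c^{+}=0$. By Remark~\ref{num_cutting} the number of twisting components equals $c^{+}$, so $L$ has no twisting component in $G$; hence $T=0$ and $|T|=0$, which is the first line of \eqref{top_burgu}.

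Now suppose $c^{+}\neq 0$. By Remark~\ref{not_c_cutting} there is then no copy of the curve $c$ in $G$, so every arc of $L$ meeting $\gamma$ is a front genus component, a back genus component, or a twisting component (recall $\gamma$ lies entirely in $G$ with both endpoints on $\partial S_n$). The geometric input, read off from Figures~\ref{genus_loop} and~\ref{cikis_yon} and using that $L$ is the minimal representative, is: (i) each front genus component meets $\gamma$ exactly once; (ii) each back genus component meets $\gamma$ exactly once; (iii) a twisting component with twist number $t$ meets $\gamma$ in exactly $|t|$ points — this is immediate from the definition of the twist number as the signed count of intersections with $\gamma$, once one checks (again by minimality, and because such a component winds monotonically around the genus) that all of its intersections with $\gamma$ carry the same sign, so that in particular a non‑twist component ($t=0$) is disjoint from $\gamma$. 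Summing over the components of $L$ and invoking Lemma~\ref{cins} for the number $l$ of front genus and $l'$ of back genus components, we obtain
\[
\gamma \;=\; l + l' + \sum_{j}|t_j| \;=\; \frac{\beta_{n+1}-c^{+}}{2}+\frac{\beta_{1}-c^{+}}{2}+\sum_{j}|t_j|,
\]
the sum being over the $c^{+}$ twisting components, with twist numbers $t_j$.

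It remains to identify $\sum_j|t_j|$ with $|T|$ and to fix the sign. By Remark~\ref{not_greater_1} all twisting components twist in the same direction, so the $t_j$ all have the same sign (possibly zero); hence $\sum_j|t_j| = |\sum_j t_j| = |T|$, and rearranging the displayed identity yields $|T| = \gamma - \frac{\beta_{n+1}-c^{+}}{2} - \frac{\beta_{1}-c^{+}}{2}$. Finally, the sign convention is forced by the orientations of $\gamma$ and of the twisting: a clockwise (negative) twist component crosses $\gamma$ with local sign $-1$ and a counterclockwise (positive) one with local sign $+1$, so $T<0$ exactly when the twisting is clockwise and $T>0$ when it is counterclockwise. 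The main obstacle I anticipate is item (iii): verifying carefully from the model that, in minimal position, a twisting component meets $\gamma$ in precisely $|t|$ points with no sign cancellation, and pinning down the counts in (i) and (ii) as exactly one (rather than merely an odd number); once these are in place, the rest is bookkeeping with Lemma~\ref{cins} and Remarks~\ref{not_c_cutting}--\ref{not_greater_1}.
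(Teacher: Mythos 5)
Your proposal is correct and follows essentially the same route as the paper: decompose the intersections of $L$ with $\gamma$ over the component types in the region $G$ (using Remark~\ref{not_c_cutting} to exclude copies of $c$ when $c^{+}\neq 0$), obtain $\gamma = l + l' + |T|$, and substitute Lemma~\ref{cins}. You are merely more explicit than the paper about the trivial case $c^{+}=0$, about a twisting component with twist number $t$ meeting $\gamma$ in exactly $|t|$ points without cancellation, and about $\sum_j|t_j|=|T|$ via Remark~\ref{not_greater_1}, which the paper takes for granted.
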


\begin{proof}
Let us denote the total twist number of twisting components of $L$ by $|T|$.  Note that the curve $\gamma$   intersects once with curve $c$ (Figure~\ref{genus_loop} (a)) 
and it intersects once with each front  and back genus components (Figures~\ref{genus_loop} (b) and (c), respectively).  Also, $\gamma$ intersects by the total number of twists of twisting components (Figure~\ref{cikis_yon}) with $L$. However, from Remark~\ref{not_c_cutting}, there can not be 
twists and  curve $c$ at the same time.  Therefore, when $c^{+} \neq 0$, we have 
\begin{equation}\label{gama_kesenler}
\gamma = l + l' + |T|
\end{equation}
\noindent where $l$, ~$l'$ and $|T|$ denote the number of front genus, back genus components and total twist number of twisting components, respectively.

From Lemma~\ref{cins},
\begin{equation*}
\gamma = \frac{\beta_{n+1} - c^{+}}{2} + \frac{\beta_{1} - c^{+}}{2} + |T|.
\end{equation*}
\noindent Hence,
\begin{equation*}
|T| = \gamma - \frac{\beta_{n+1} - c^{+}}{2} - \frac{\beta_{1} - c^{+}}{2}.
\end{equation*}
\end{proof}

By using the following theorem, we can calculate the number of  curves $c$ (Figure~\ref{genus_loop} (a)):
\begin{lemma}
Let $L$ be given with the intersection numbers  $(\alpha; \beta; \gamma; c)$. The number of  curves $c$ in $L$ is given by
\begin{align}\label{c_egri_number}
p(c)& = \left\{ \begin{array}{ll}
       \gamma - \frac{\beta_{n+1}}{2} - \frac{\beta_{1}}{2}& \mbox{if $c^{+} = 0$},\\
         0& \mbox{if $c^{+} \neq 0$.}   
         \end{array} \right.
\end{align}
\end{lemma}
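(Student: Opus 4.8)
The plan is to argue the two cases of \eqref{c_egri_number} separately, reusing the intersection-counting technique from Lemma~\ref{lem_total_twist}. Suppose first that $c^{+}\neq 0$, i.e.\ $c>0$. By the convention set in \eqref{copy}, the coordinate $c$ is negative exactly when $L$ contains copies of the curve $c$ (in which case $c=-p(c)$ with $p(c)>0$); since here $c>0$, there are no such copies, so $p(c)=0$. Equivalently, Remark~\ref{num_cutting} says there are $c^{+}>0$ twisting components in $G$, and Remark~\ref{not_c_cutting} forbids twisting components and the curve $c$ from appearing simultaneously, so again $p(c)=0$.

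Now suppose $c^{+}=0$. By Remark~\ref{num_cutting} there are then no twisting components, so every path component of $L$ contained in $G$ is either a copy of the curve $c$, a front genus component, or a back genus component. I would next count $\gamma\cdot L$: the arc $\gamma$ lies in $G$ and hence is disjoint from every component inside the regions $U_{i}$; it meets each copy of $c$ exactly once (since $\gamma$ crosses $c$ once transversally and, by minimality, the copies of $c$ are parallel to $c$); and, exactly as recorded in the proof of Lemma~\ref{lem_total_twist}, it meets each front genus and each back genus component exactly once. Therefore $\gamma=p(c)+l+l'$, where $l$ and $l'$ are the numbers of front and back genus components. Substituting $l=\frac{\beta_{n+1}-c^{+}}{2}=\frac{\beta_{n+1}}{2}$ and $l'=\frac{\beta_{1}-c^{+}}{2}=\frac{\beta_{1}}{2}$ from Lemma~\ref{cins} and solving for $p(c)$ gives $p(c)=\gamma-\frac{\beta_{n+1}}{2}-\frac{\beta_{1}}{2}$, as claimed.

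The only delicate point is the bookkeeping of which components $\gamma$ crosses and with what multiplicity: one has to be sure that when $c^{+}=0$ the components lying in $G$ are exhausted by the copies of $c$ together with the front and back genus components, and that $\gamma$ meets each of these exactly once while meeting nothing in the $U_{i}$'s. This is a direct inspection of the model in Figure~\ref{model} and Figure~\ref{genus_loop}, of the same character as the counts already performed in Lemmas~\ref{cins} and~\ref{lem_total_twist}, so no genuinely new argument is needed.
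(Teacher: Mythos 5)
Your proposal is correct and follows essentially the same route as the paper: in the case $c^{+}=0$ both arguments write $\gamma = p(c) + l + l'$ and substitute $l=\frac{\beta_{n+1}}{2}$, $l'=\frac{\beta_{1}}{2}$ from Lemma~\ref{cins}. The only difference is that you spell out the easy case $c^{+}\neq 0$ (via the sign convention of \eqref{copy} and Remark~\ref{not_c_cutting}), which the paper leaves implicit.
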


\begin{proof}
Since $c^{+} = 0$, we can write $\gamma = l + l' + p(c)$. From Lemma~\ref{cins}, 
\begin{eqnarray*}
l = \frac{\beta_{n+1}}{2}  \quad \text{ and } \quad  l' = \frac{\beta_{1}}{2}.
\end{eqnarray*}
Hence, $p(c) = \gamma - \frac{\beta_{n+1}}{2} - \frac{\beta_{1}}{2}$ is derived.
\end{proof}

The twist numbers of each twisting component of an integral lamination whose intersection numbers are given are found by using Remark~\ref{not_greater_1} and Lemma~\ref{lem_total_twist}, which we find these twist numbers with the following lemma. 
\begin{lemma}\label{each_twist}
Let $L$ be given with the intersection numbers  $(\alpha; \beta; \gamma; c)$. Let $|T|$ and $m$ be the total twist number and the number of twisting components which has $t+1$ twists, respectively. In this case,
\begin{equation}
m \equiv |T| ~(mod ~c^{+})  \quad \text{ and } \quad t = \frac{|T| - m}{c^{+}}
\end{equation}
\noindent where $c^{+} \neq 0$.
\end{lemma}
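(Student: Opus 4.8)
The plan is to exploit the fact, established in Remark~\ref{not_greater_1}, that every twisting component carries one of exactly two consecutive twist numbers, namely $t$ or $t+1$. Write $m$ for the number of components with twist number $t+1$; then by Remark~\ref{num_cutting} there are exactly $c^{+}$ twisting components in total, so $c^{+}-m$ of them have twist number $t$. The total twist number is therefore
\begin{equation*}
|T| = m(t+1) + (c^{+}-m)t = tc^{+} + m,
\end{equation*}
which is the identity already recorded in Remark~\ref{not_greater_1}. The whole lemma is really just the observation that this single equation, together with the range constraint $0 \le m < c^{+}$, determines $t$ and $m$ uniquely from $|T|$ and $c^{+}$ by Euclidean division.

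First I would justify the inequality $0 \le m \le c^{+}$: since $m$ counts a subcollection of the $c^{+}$ twisting components it is a non-negative integer not exceeding $c^{+}$. I would then argue that in the generic situation one may take $0 \le m < c^{+}$: if $m = c^{+}$ (all components have twist number $t+1$) then the difference between any two twist numbers is $0$, so we are in the first case of Remark~\ref{not_greater_1} with common twist number $t+1$, and relabelling $t' = t+1$, $m' = 0$ gives the same lamination described with $m' = 0 < c^{+}$. So without loss of generality $0 \le m < c^{+}$. From $|T| = tc^{+} + m$ with $0 \le m < c^{+}$ and $t \ge 0$, the uniqueness and existence part of the division algorithm gives precisely $m = |T| \bmod c^{+}$ and $t = (|T|-m)/c^{+} = \lfloor |T|/c^{+}\rfloor$, which is the assertion of the lemma. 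Here $|T|$ is computed from the intersection numbers via Lemma~\ref{lem_total_twist} and $c^{+}$ from the coordinates directly, so both inputs are indeed determined by $(\alpha;\beta;\gamma;c)$.

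The only genuine subtlety — the step I expect to require the most care — is the claim that \emph{every} twisting component has twist number in $\{t, t+1\}$, i.e.\ that no two twisting components differ by more than $1$ and that they cannot be split among three or more distinct values. This is exactly the content of Remark~\ref{not_greater_1} (two components differing by more than one must intersect, contradicting that $L$ is an integral lamination), so I would simply invoke it; the remark also pins down that all twists go the same direction, which is what lets us speak of a single total twist number $|T|$ with a well-defined sign as in Lemma~\ref{lem_total_twist}. Everything else is the elementary uniqueness of quotient and remainder, so the proof is short: set up $|T| = tc^{+} + m$, observe $0 \le m < c^{+}$ after the harmless relabelling above, and read off $m \equiv |T| \pmod{c^{+}}$ and $t = (|T|-m)/c^{+}$.
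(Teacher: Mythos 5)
Your proof is correct and follows essentially the same route as the paper: both start from the identity $|T| = m(t+1) + (c^{+}-m)t = m + tc^{+}$ of Remark~\ref{not_greater_1} and read off the congruence and the formula for $t$. The extra care you take with the normalization $0 \le m < c^{+}$ (relabelling when $m = c^{+}$) is a harmless refinement the paper omits, since the stated congruence and quotient formula already follow directly from the identity.
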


\begin{proof}
From Remark~\ref{not_greater_1},
\begin{equation*}
|T| = m(t + 1) + (c^{+} - m)t.
\end{equation*}
\noindent From here, we have
\begin{equation*}\label{bolme_kurali}
|T| = m  +  tc^{+}.
\end{equation*}
\noindent Hence,
\begin{equation*}\label{mod}
m \equiv |T| ~(mod ~c^{+}) \quad \mbox{ and } \quad t = \frac{|T| - m}{c^{+}} 
\end{equation*}
\noindent are derived.
                                                                    
\end{proof}

\begin{remark}
The intersection numbers  $(\alpha; \beta; \gamma; c)$ might not always give an integral lamination. Because intersection numbers may not provide the conditions given in Lemma~\ref{equalities} or Lemma~\ref{odd_even}, and  the   triangle inequality in each region where is bounded by $\alpha_{2i-1}$, ~$\alpha_{2i}$, ~$\beta_{i}$ or by $\alpha_{2i-1}$, ~$\alpha_{2i}$, ~$\beta_{i+1}$.
\end{remark}

To illustrate, we can not construct an integral lamination having the intersection numbers $(1, 1, 1, 1, 1, 1; 0, 2, 0, 2; 2; 1)$. Because, according to Lemma~\ref{cins}, the numbers of front genus and back genus components are respectively
$$
l = \frac{\beta_{4} - c^{+}}{2} = \frac{2 - 1}{2} = \frac{1}{2} \notin \Z_{\geq 0} \quad \mbox{ and } \quad l' = \frac{\beta_{1} - c^{+}}{2} = \frac{0 - 1}{2} = -\frac{1}{2} \notin \Z_{\geq 0}.
$$
\noindent In such a case, any integral lamination can not be constructed as shown in Figure~\ref{not_surjec}.
\begin{figure}[!ht]
\centering
\includegraphics[width=0.72\textwidth]{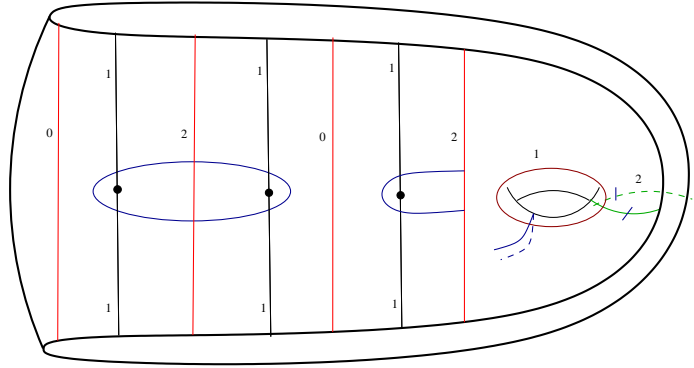}
\caption{$\alpha_{2i}\cup \alpha_{2i-1}$ and $\beta_{i}$  are each even, however $c$ is odd. }\label{not_surjec}
\end{figure}

\begin{remark}
Let the number of loop components in each region $U_i$  for $1 \leq i \leq n$ be denoted by $|b_i|$, where 
\begin{equation}\label{puncture_components}
b_{i} = \frac{\beta_{i} - \beta_{i+1}}{2}.
\end{equation}
If $b_i < 0$, loop component is called \emph{left}; if $b_i > 0$, loop component is called \emph{right} \cite{dynnikov02}.
\end{remark}

\begin{lemma}[\cite{yurttas11}]\label{equalities}
The following equalities hold for each $U_i$:

When there is a left loop component,
\begin{equation*}
\alpha_{2i} + \alpha_{2i-1} = \beta_{i+1}
\end{equation*}
\begin{equation*}
\alpha_{2i} + \alpha_{2i-1} - \beta_{i} = 2|b_{i}|,
\end{equation*}
\noindent when there is a right loop component,
\begin{equation*}
\alpha_{2i} + \alpha_{2i-1} = \beta_{i}
\end{equation*}
\begin{equation*}
\alpha_{2i} + \alpha_{2i-1} - \beta_{i+1} = 2|b_{i}|,
\end{equation*}
\noindent when there is no  loop components,
\begin{equation*}
\alpha_{2i} + \alpha_{2i-1} = \beta_{i} = \beta_{i+1}.
\end{equation*}
\end{lemma}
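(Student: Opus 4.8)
The plan is a direct count of the arcs of $L$ inside the single region $U_i$, using the classification of components recalled above. Since $L$ is minimal, every component of $L\cap U_i$ is one of the four listed types — above, below, left loop, right loop — and, by the usual innermost-disk/bigon argument on the punctured disk, a left loop component and a right loop component cannot occur simultaneously in the same $U_i$: an arc enclosing the puncture with both endpoints on $\beta_{i+1}$ and an arc enclosing it with both endpoints on $\beta_i$ would be forced to cross, contradicting that $L$ is an embedded integral lamination in minimal position. Hence at most one loop type is present in $U_i$.

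Next I would introduce the counting variables. Let $A$ and $B$ denote the numbers of above and below components in $U_i$, and let $\ell=|b_i|$ denote the number of loop components (all of the same type, by the previous paragraph). From the component descriptions (cf.\ Figure~\ref{puncture_bilesen}): an above component crosses $\alpha_{2i-1}$ once and meets $\beta_i$ and $\beta_{i+1}$ once each; a below component crosses $\alpha_{2i}$ once and meets $\beta_i$ and $\beta_{i+1}$ once each; a loop component crosses $\alpha_{2i-1}$ and $\alpha_{2i}$ once each and meets twice the $\beta$-arc carrying its two endpoints ($\beta_{i+1}$ for a left loop, $\beta_i$ for a right loop) and the other $\beta$-arc not at all — here one again invokes minimality to rule out excess crossings. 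This yields
\begin{align*}
\alpha_{2i-1} &= A + \ell, & \alpha_{2i} &= B + \ell,\\
\beta_i &= A + B + 2\ell_{\mathrm r}, & \beta_{i+1} &= A + B + 2\ell_{\mathrm l},
\end{align*}
where $(\ell_{\mathrm l},\ell_{\mathrm r})\in\{(\ell,0),(0,\ell),(0,0)\}$ according to whether the loops present are left, right, or absent.

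The three cases of the statement then follow by substitution. If there is a left loop component, $(\ell_{\mathrm l},\ell_{\mathrm r})=(\ell,0)$, so $\alpha_{2i-1}+\alpha_{2i}=A+B+2\ell=\beta_{i+1}$ and $\alpha_{2i-1}+\alpha_{2i}-\beta_i=2\ell=2|b_i|$; note that $b_i=(\beta_i-\beta_{i+1})/2=-\ell<0$, matching the left-loop convention. The right-loop case is identical after interchanging the roles of $\beta_i$ and $\beta_{i+1}$, giving $\alpha_{2i-1}+\alpha_{2i}=\beta_i$, $\alpha_{2i-1}+\alpha_{2i}-\beta_{i+1}=2|b_i|$, and $b_i=\ell>0$. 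When there is no loop component, $\ell=0$ and both identities degenerate to $\alpha_{2i-1}+\alpha_{2i}=A+B=\beta_i=\beta_{i+1}$.

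The only non-routine ingredient is the content of the first paragraph: that the four component types exhaust $L\cap U_i$ and that left and right loops are mutually exclusive in a fixed $U_i$. This is exactly the configuration analysed for the punctured disk in \cite{yurttas11}, where an innermost bigon is shown to reduce the intersection of $L$ with one of $\alpha_{2i-1}$, $\alpha_{2i}$, $\beta_i$, or $\beta_{i+1}$, contradicting minimality; the argument applies directly here because $U_i$ and its bounding curves involve only a single puncture and the $\alpha$, $\beta$ arcs, with no genus. I would therefore cite \cite{yurttas11} for that step and carry out only the bookkeeping above in full.
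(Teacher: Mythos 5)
Your proof is correct. Note, however, that the paper does not prove this lemma at all: it is quoted with a citation to \cite{yurttas11}, where the same intersection identities are established for the punctured disk, and the author simply imports them because each region $U_i$ (bounded by $\beta_i$, $\beta_{i+1}$ and containing one puncture, with no genus) is identical to the disk situation. Your self-contained argument — classifying the arcs of $L\cap U_i$ into above, below, and a single loop type, observing that left and right loops around the same puncture would have to cross an embedded lamination, and then tallying crossings to get $\alpha_{2i-1}=A+\ell$, $\alpha_{2i}=B+\ell$, $\beta_i=A+B+2\ell_{\mathrm r}$, $\beta_{i+1}=A+B+2\ell_{\mathrm l}$ — is exactly the bookkeeping that underlies the cited result, and your case analysis reproduces the three stated equalities together with the sign convention $b_i=(\beta_i-\beta_{i+1})/2$ distinguishing left from right loops. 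The only points worth making explicit if you want the argument fully airtight are (i) the exclusion of closed components of $L\cap U_i$ encircling the puncture, which is part of the standing convention that an integral lamination contains no curve parallel to a puncture, and (ii) the minimality argument (absence of bigons with $\beta_i$ or $\beta_{i+1}$) that rules out arcs re-entering through the same $\beta$-arc without surrounding the puncture; you gesture at both via the paper's classification of components, which is acceptable, since that classification is stated in Section~\ref{our_coordinates}.
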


\begin{lemma}\label{odd_even}
Let $L$ be given with the intersection numbers  $(\alpha; \beta; \gamma; c)$. Then for each $1 \leq i \leq n$, ~$\beta_{i} - \beta_{i+1}$ and $\alpha_{2i} - \alpha_{2i-1} - c^{+}$ are even.
\end{lemma}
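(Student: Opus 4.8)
The plan is to analyze the intersection numbers region by region, using the classification of path components established above. First I would prove that $\beta_i - \beta_{i+1}$ is even: by Lemma~\ref{equalities}, in each region $U_i$ the quantity $\beta_i - \beta_{i+1}$ equals either $\pm 2|b_i|$ (when there is a left or right loop component) or $0$ (when there is no loop component). In all three cases this is manifestly an even integer, so the first assertion is immediate from the component classification in $U_i$.

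For the parity of $\alpha_{2i} - \alpha_{2i-1} - c^{+}$, I would count intersections of $L$ with $\alpha_{2i-1}$ and $\alpha_{2i}$ in terms of the path components of $L$. The arc $\alpha_{2i-1}$ is crossed by above components, left loop components, right loop components, and — for the arcs lying in the region $G$ — by twisting components (each twisting component meets the relevant $\alpha$ arc the appropriate number of times), while $\alpha_{2i}$ is crossed by below components together with the same loop and twisting contributions. The key observation is that every loop component (left or right) contributes \emph{equally} to $\alpha_{2i-1}$ and to $\alpha_{2i}$, since by definition a loop component meets both $\alpha_{2i-1}$ and $\alpha_{2i}$ exactly once; hence the loop contributions cancel in the difference $\alpha_{2i} - \alpha_{2i-1}$. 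What remains is the difference between the number of below components and the number of above components, plus the net twisting contribution, and by Remark~\ref{num_cutting} the twisting contribution to this difference is precisely $c^{+}$ (each of the $c^{+}$ twisting components contributes one extra crossing to exactly one side). Subtracting $c^{+}$ therefore leaves the difference of the above/below counts, which must be even because above and below components pair up along $\beta_i$ and $\beta_{i+1}$: each boundary circle $\beta_i$ receives an even total number of endpoints from non-loop arcs, forcing the above and below counts to have the same parity.

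A cleaner route for the second part, which I would actually use to keep the bookkeeping light, is to reduce modulo $2$ directly. Working mod $2$, loop components are invisible in the difference, so $\alpha_{2i} - \alpha_{2i-1} \equiv (\text{\# below}) - (\text{\# above}) + (\text{twist contribution}) \pmod 2$; then one checks $\alpha_{2i}+\alpha_{2i-1} \equiv \beta_i \equiv \beta_{i+1}\pmod 2$ from Lemma~\ref{equalities} (all three cases give $\alpha_{2i}+\alpha_{2i-1}$ equal to one of $\beta_i,\beta_{i+1}$, and we have just shown $\beta_i\equiv\beta_{i+1}$), and combines this with the count $\beta_{n+1}=c^{+}+2l$, $\beta_1 = c^{+}+2l'$ from Lemma~\ref{cins}, which gives $\beta_1\equiv\beta_{n+1}\equiv c^{+}\pmod 2$; propagating $\beta_i\equiv\beta_{i+1}$ across all $i$ then yields $\beta_i\equiv c^{+}$ for every $i$, hence $\alpha_{2i}+\alpha_{2i-1}\equiv c^{+}$, so $\alpha_{2i}-\alpha_{2i-1}-c^{+}\equiv \alpha_{2i}+\alpha_{2i-1}-c^{+} \equiv 0\pmod 2$, using $-x\equiv x\pmod 2$.

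The main obstacle is making the twisting contribution rigorous: one must verify carefully that, when $U_i$ sits adjacent to $G$, each twisting component adds exactly one crossing to $\alpha_{2i}$ or $\alpha_{2i-1}$ and not both (so the net effect on the difference is $c^{+}$, not $0$ or $2c^{+}$), and that for interior regions not adjacent to $G$ there is no twisting contribution at all, in which case one still gets $\alpha_{2i}-\alpha_{2i-1}$ even and $c^{+}$ even separately from $\beta_i\equiv c^{+}$. I expect this case analysis — interior $U_i$ versus the $U_i$ meeting $G$, and tracking exactly how twist components cross the $\alpha$ arcs in Figures~\ref{cikis_yon} and \ref{puncture_bilesen} — to be where the real work lies; everything else is the parity bookkeeping sketched above.
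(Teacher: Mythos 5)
Your proposal is correct and, in the ``cleaner route'' you say you would actually use, it is essentially the paper's own proof: $\beta_1\equiv\beta_{n+1}\equiv c^{+}\pmod 2$ from Lemma~\ref{cins}, all $\beta_i$ share this parity since $\beta_i-\beta_{i+1}=\pm 2|b_i|$ or $0$, and $\alpha_{2i}+\alpha_{2i-1}$ equals $\beta_i$ or $\beta_{i+1}$ by Lemma~\ref{equalities}, giving the claim mod $2$. The component-by-component twist-tracking you flag as ``where the real work lies'' is unnecessary, since the parity argument you carry out already bypasses it entirely.
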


\begin{proof}
From Lemma~\ref{cins}, since $$\beta_{n+1} = c^{+} + 2l,$$ \noindent if $c^{+}$ is even (odd), $\beta_{n+1}$ is even (odd). Similarly, since $$\beta_{1} = c^{+} + 2l',$$ if $c^{+}$ is even (odd), $\beta_{1}$ is even (odd). Also, from \cite{dynnikov02}, the number of loop components is given by $$b_{i} = \frac{\beta_{i} - \beta_{i+1}}{2}   ~(1 \leq i \leq n).$$  \noindent From here, we can write
$$\beta_{i+1} = \beta_{i} - 2\sum_{j=1}^{i}b_{j}.$$ Therefore, if $c^{+}$ is even (odd), each $\beta_i  ~(1 \leq i \leq n+1)$ is even (odd).

From Lemma~\ref{equalities}, when there is right loop component, $\alpha_{2i} + \alpha_{2i-1} = \beta_{i}$; when there is left loop component, $\alpha_{2i} + \alpha_{2i-1} = \beta_{i+1}$. Hence,
when $c^{+}$ is even (odd), $\alpha_{2i} + \alpha_{2i-1}$ is even (odd).

Since when $c^{+}$ is even, $\alpha_{2i} + \alpha_{2i-1}$ is even and when $c^{+}$ is odd, $\alpha_{2i} + \alpha_{2i-1}$ is odd, $\alpha_{2i} + \alpha_{2i-1} - c^{+}$ is always even.

\end{proof}

\begin{lemma}[\cite{dynnikov02}]\label{lem_ab_bel}
Let $L \in \cL_{n}$ be given with the intersection numbers  $(\alpha; \beta; \gamma; c)$. For each $1 \leq i \leq n$, the number of above, $u_i^a$, and below, $u_i^b$,  components in $U_i$ can be found by $$u_{i}^{a} = \alpha_{2i-1} - |b_{i}| \quad \mbox{ and } \quad u_{i}^{b}  = \alpha_{2i} - |b_{i}|.$$ 
\end{lemma}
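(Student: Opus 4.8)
The plan is to mimic the classical disk computation (Lemma~\ref{equalities}) inside each region $U_i$, but restricted to the sub-arcs $\alpha_{2i-1}$ and $\alpha_{2i}$ separately rather than their sum. First I would recall from the description of path components that the only components of $L$ meeting $U_i$ are the four listed types: above, below, left loop and right loop. Among these, $\alpha_{2i-1}$ is met exactly by the above components (once each) and by the loop components (once each, regardless of whether they are left or right, since every loop component in $U_i$ wraps around the puncture and so crosses $\alpha_{2i-1}$ precisely once); similarly $\alpha_{2i}$ is met exactly by the below components and by every loop component once. This gives the two counting identities $\alpha_{2i-1} = u_i^a + (\text{number of loop components})$ and $\alpha_{2i} = u_i^b + (\text{number of loop components})$.

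Next I would invoke the already-stated fact (the remark preceding Lemma~\ref{equalities}, attributed to \cite{dynnikov02}) that the number of loop components in $U_i$ is exactly $|b_i|$, where $b_i = (\beta_i - \beta_{i+1})/2$; the sign of $b_i$ only records whether the loops are left or right, which is irrelevant to the count. Substituting this into the two identities from the previous paragraph yields immediately
\begin{equation*}
u_i^a = \alpha_{2i-1} - |b_i| \qquad \text{and} \qquad u_i^b = \alpha_{2i} - |b_i|,
\end{equation*}
which is the claim. One should also note the consistency check: adding these two equations gives $u_i^a + u_i^b = \alpha_{2i-1} + \alpha_{2i} - 2|b_i|$, which matches the relations of Lemma~\ref{equalities} (e.g.\ in the right-loop case $\alpha_{2i-1}+\alpha_{2i} = \beta_i$ and $\alpha_{2i-1}+\alpha_{2i}-\beta_{i+1} = 2|b_i|$), so the decomposition into above/below/loop pieces is accounted for correctly.

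The only genuinely delicate point — the ``main obstacle'' — is justifying the claim that each loop component of $U_i$ crosses \emph{both} $\alpha_{2i-1}$ and $\alpha_{2i}$ exactly once, and that no above/below component crosses the ``wrong'' one of the two arcs. This is where minimality of $L$ is essential: if some component intersected, say, $\alpha_{2i-1}$ more than once, or an above component also met $\alpha_{2i}$, one could perform an isotopy across the bigon cut off by $\alpha_{2i-1}$ (together with a piece of $\beta_i$ or $\beta_{i+1}$) and reduce the total intersection number, contradicting minimality; this is exactly the reduction step behind the classification into the four component types in the first place (as in \cite{yurttas13}). Since that classification is already taken as given in the text, the proof reduces to the bookkeeping above, and I would present it in that order: (i) which components meet $\alpha_{2i-1}$ and $\alpha_{2i}$ and with what multiplicity; (ii) substitute the loop count $|b_i|$; (iii) solve for $u_i^a$ and $u_i^b$.
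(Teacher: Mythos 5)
Your argument is correct. Note that the paper itself gives no proof of this lemma: it is quoted from \cite{dynnikov02}, being the same statement as in the punctured-disk case (the regions $U_i$ and the arcs $\alpha_{2i-1},\alpha_{2i},\beta_i,\beta_{i+1}$ are exactly as on $D_n$, so the disk result applies verbatim). Your counting argument --- $\alpha_{2i-1}=u_i^a+(\text{loops})$, $\alpha_{2i}=u_i^b+(\text{loops})$, then substitute that the number of loop components is $|b_i|$ --- is precisely the standard derivation behind the cited fact, and your appeal to minimality and the four-type classification to pin down the incidence multiplicities is the right justification. The only point worth making explicit is that left and right loop components cannot coexist in a single $U_i$ (they would intersect), which is what lets $|b_i|=|\beta_i-\beta_{i+1}|/2$ count \emph{all} loops rather than a signed difference; since the paper's remark already defines $|b_i|$ as the loop count, your use of it is consistent with what the text takes as given.
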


\begin{remark}
The  intersection numbers of two different integral laminations might be the same.
\end{remark}

For example, while the  intersection numbers of two integral laminations given in Figure~\ref{draw_int_lam} are $(2, 2, 2, 2, 2, 2; 2, 4, 2, 4; 4; 2)$, since the twisting components of integral lamination in Figure~\ref{draw_int_lam} (a) twist in the negative direction and the twisting components of integral lamination in Figure~\ref{draw_int_lam} (b) twist in the positive direction, these integral laminations are different. Therefore, intersection numbers can not give an injective function.
\begin{figure}[!ht]
\centering
\includegraphics[width=1.0\textwidth]{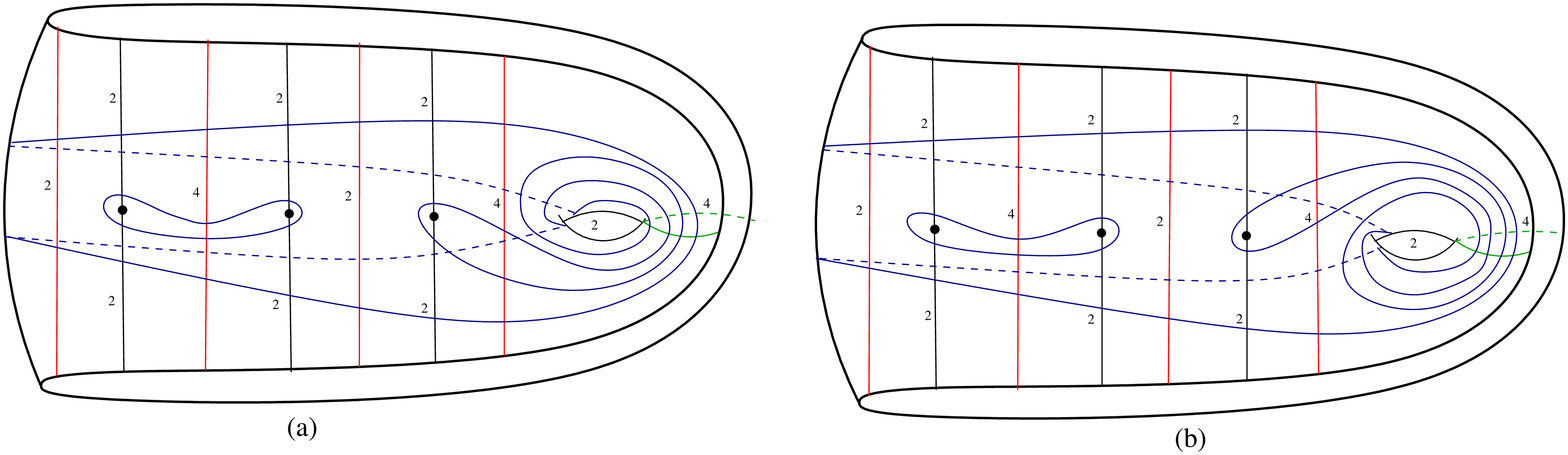}
\caption{ Two different integral laminations with the same intersection numbers}\label{draw_int_lam}
\end{figure}

\begin{remark}\label{being_injective}
Note that  two different integral laminations might have the same intersection numbers  (the directions of twisting components can be different). Therefore, we can derive an injective function from  intersection numbers  $(\alpha; \beta; \gamma; c)$ by giving a direction to the twists of twisting components. 
\end{remark}

Let  $m_{i} = \min (\alpha_{2i} - |b_{i}|, \alpha_{2i-1} - |b_{i}|)$ and set $2a_{i} = \alpha_{2i} - \alpha_{2i-1} - c^{+}$ for $1 \leq i \leq n$. By Lemma~\ref{odd_even}, $a_i$ is an integer. By similar calculations as in the way: disk case, we can derive the intersection number with $\alpha_i$ on $S_n$ in the following:

For each $1 \leq i \leq 2n$,

\begin{align}\label{alfa_inversion}
\alpha_{i}& = \left\{ \begin{array}{ll}
          \frac{2(-1)^{i}a_{\lceil i/2 \rceil} + (-1)^{i}c^{+} + \beta_{\lceil i/2 \rceil}}{2}& \mbox{if $b_{\lceil i/2 \rceil} \geq 0$},\\
         \frac{2(-1)^{i}a_{\lceil i/2 \rceil} + (-1)^{i}c^{+} + \beta_{(1 + \lceil i/2 \rceil)}}{2}& \mbox{if $b_{\lceil i/2 \rceil} \leq 0$}.   
         \end{array} \right.
\end{align}
\noindent where $\lceil x \rceil$ is the smallest integer greater than or equal to $x$.

Now, we derive the intersection number  with $\beta_i$ on $S_n$. Let $l$, ~$l'$ and $m_{i}$ ~$(1 \leq i \leq n)$ show the front genus number, back genus number and the minimum of above and below component numbers, respectively. Since $L$ can not contain a curve that bounds the boundary, at least one of $m_i$, ~$l$ or  $l'$ has to be $0$. There are two cases:

\textbf{Case 1:} Assume at least one of $m_i = 0$ for $ 1 \leq i \leq n $. In this case 
\begin{equation}\label{good_beta_son}
\beta_{n+1} = \max_{1 \leq k \leq n}\left[2\max(b_{k}, 0) + |2a_{k} + c^{+}| - 2\sum_{j=k}^{n}b_{j}\right] 
\end{equation}
and
\begin{equation}\label{good_big_bad}
  \beta_{n+1} \geq \max (c^{+}, c^{+} - 2\sum_{i=1}^{n}b_{i}). 
\end{equation}
\noindent An example for this case is depicted in Figure~\ref{beta_cift_ornek}.

\begin{figure}[!ht]
\centering
\includegraphics[width=0.65\textwidth]{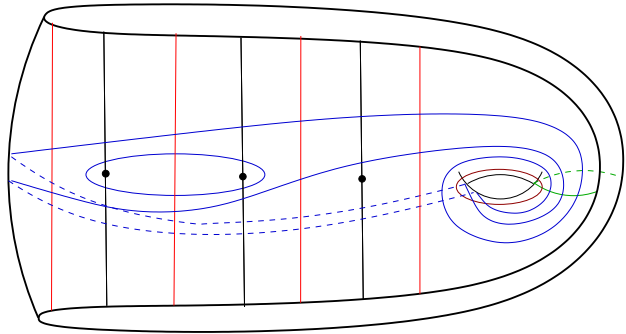}
\caption{ An integral lamination with $m_{i} \neq 0$ for $i = 1, 2$ }\label{beta_cift_ornek}
\end{figure}

\textbf{Case 2:} If $m_{i} \neq 0$ for any $ 1 \leq i \leq n $: In this case, an integral lamination contains curves whose each above and below component number are different from $0$ (see Figure~\ref{bad_start}).
\begin{figure}[!ht]
\centering
\psfrag{a}[tl]{\scalebox{0.55}{$\scriptstyle{\mbox{{(a)}}}$}}
\psfrag{b}[tl]{\scalebox{0.55}{$\scriptstyle{\mbox{{(b)}}}$}}
\includegraphics[width=1.0\textwidth]{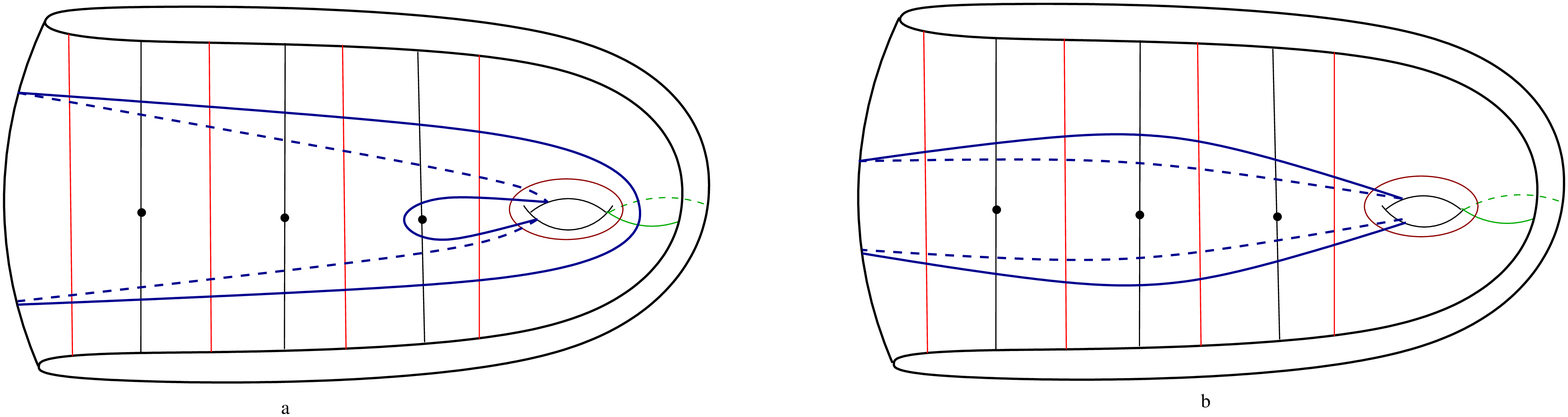}
\caption{ Integral laminations with each $m_{i}$ is different from $0$ }\label{bad_start}
\end{figure}
Also, at least one of the front genus or back genus component numbers must be $0$. Otherwise, this curve system contains curves parallel to the boundary as shown in the Figure~\ref{bo_paralel}.
\begin{figure}[!ht]
\centering
\includegraphics[width=0.65\textwidth]{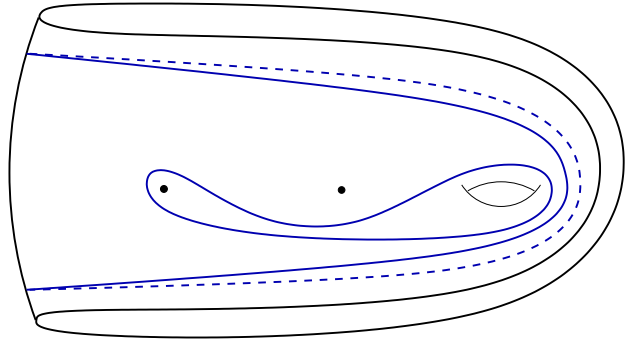}
\caption{ A curve system with a curve parallel to the boundary }\label{bo_paralel}
\end{figure}
Therefore, there are three possibilities: 

\begin{enumerate}
\item[(i)] If $l = l' = 0$ and $\sum_{i=1}^{n}b_{i} = 0,$
\item[(ii)] If $l > 0$, $l' = 0$ and $\sum_{i=1}^{n}b_{i} < 0,$
\item[(iii)] If $l = 0$, $l' > 0$ and $\sum_{i=1}^{n}b_{i} > 0.$
\end{enumerate}
Combining cases (i), (ii) and (iii), we get 
\begin{align}\label{badbeta}
\beta_{n+1}& = \left\{ \begin{array}{ll}
          c^{+} - 2\sum_{i=1}^{n}b_{i} & \mbox{if $\sum_{i=1}^{n}b_{i} < 0$},\\
         c^{+} & \mbox{if $\sum_{i=1}^{n}b_{i} > 0$}, \\
				c^{+} & \mbox{if $\sum_{i=1}^{n}b_{i} = 0$}.
         \end{array} \right.
\end{align}
Since each $m_{i} > 0$  for  $(1 \leq i \leq n)$, we have 
\begin{equation}\label{bad_big_good}
\beta_{n+1} \geq \max_{1 \leq k \leq n}\left[2\max(b_{k}, 0) + |2a_{k} + c^{+}| - 2\sum_{j=k}^{n}b_{j}\right].
\end{equation}
In terms of brevity, let $$ \kappa:=  \max_{1 \leq k \leq n}\left[2\max(b_{k}, 0) + |2a_{k} + c^{+}| - 2\sum_{j=k}^{n}b_{j}\right]. $$
From inequalities (\ref{good_big_bad}) and (\ref{bad_big_good}), we have 
\begin{equation}\label{final_beta}
\beta_{n+1} = \max(c^{+}, c^{+} - 2\sum_{i=1}^{n}b_{i}, \kappa).
\end{equation}
From Equation~(\ref{puncture_components}), for each $1 \leq i \leq n$,
\begin{equation}\label{beta_inversion}
\beta_{i} = 2\sum_{j=i}^{n}b_{j} + \beta_{n+1}.
\end{equation}

Now, we derive the intersection number with $\gamma$ on $S_n$. Since each path component in the region $G$, 
except non-twist twisting components, intersects  with the arc $\gamma$ once, we have 
$$ 
\gamma = l + l' + p(c),
$$ 
\noindent 
if $p(c) \neq 0$, and when $p(c) = 0$, we have 
$$ 
\gamma = l + l' + |T|.
$$ 
\noindent 
Recall that from Equation~(\ref{copy}), $ p(c) = -c$.  Therefore,
\begin{align}\label{gama}
\gamma& = \left\{ \begin{array}{ll}
          |T| + l + l'& \mbox{if $c > 0$},\\
       |c| + l + l'& \mbox{if $c \leq 0$}.   
         \end{array} \right.
\end{align}

By Lemma~\ref{cins}, and  Equations~(\ref{final_beta}) and~(\ref{beta_inversion}), we have
\begin{align}\label{gama_inversion}
\gamma& = \left\{ \begin{array}{ll}
          |T| + \sum_{j=1}^{n}b_{j} + \max(c^{+}, c^{+} - 2\sum_{i=1}^{n}b_{i}, \kappa) - c^{+}& \mbox{if $c > 0$},\\
       |c| + \sum_{j=1}^{n}b_{j} + \max(c^{+}, c^{+} - 2\sum_{i=1}^{n}b_{i}, \kappa) - c^{+}& \mbox{if $c \leq 0$}.   
         \end{array} \right.
\end{align}

Above we have expressed the intersection numbers with arcs $\alpha_i$, $\beta_i$ and $\gamma$ in terms of $a_i$, $b_i$ and $T$.  
Note that, for an integral lamination,  $T = 0$ when $c \leq 0$.
Now, we can define \emph{Dynnikov coordinate system on} $S_n$ which bijectively coordinatizes the set  $\cL_n$ .

\begin{definition}
Let $\cV_{n} = \{(a; b; T; c): c \leq 0 \mbox{ and } T \neq 0\} \cup \{0\}.$ \emph{Dynnikov coordinate function} $\Phi : \cL_{n} \rightarrow \Z^{2n+2} \setminus \cV_{n}$ on $S_n$  is defined by 
\begin{equation*}
\Phi(L) = (a; b; T; c) = (a_{1}, \cdots, a_{n}; b_{1}, \cdots, b_{n}; T; c)
\end{equation*}
\noindent where for each $1 \leq i \leq n$,
\begin{equation}\label{Gen_Dyn_a_b_equ}
a_{i} = \frac{\alpha_{2i} - \alpha_{2i-1} - c^{+}}{2}, \quad ~b_{i} = \frac{\beta_{i} - \beta_{i+1}}{2} 
\end{equation}
\noindent and
\begin{align}\label{Dyn_twist_equ}
|T|&= \left\{ \begin{array}{ll}
       0& \mbox{if $c^{+} = 0$},\\
         \gamma - \frac{\beta_{n+1} - c^{+}}{2} - \frac{\beta_{1} - c^{+}}{2} & \mbox{if $c^{+} \neq 0.$}   
         \end{array} \right.
\end{align}
\end{definition}

\begin{example}
We calculate the Dynnikov coordinates of the integral lamination shown in Figure~\ref{ucgen_ornek}. 

Since $ (\alpha_{1}, \alpha_{2}, \alpha_{3}, \alpha_{4}, \alpha_{5}, \alpha_{6}; \beta_{1}, \beta_{2}, \beta_{3}, \beta_{4}; \gamma; c) = (4, 1, 3, 2, 4, 1; 3, 5, 5, 3; 3; 1),$ from Equations~(\ref{Gen_Dyn_a_b_equ})
\begin{equation*}
a_{1} = \frac{\alpha_{2} - \alpha_{1} - c^{+}}{2} = \frac{1 - 4 - 1}{2} = -2,
\end{equation*}

\begin{equation*}
a_{2} = \frac{\alpha_{4} - \alpha_{3} - c^{+}}{2} = \frac{2 - 3 - 1}{2} = -1,
\end{equation*}

\begin{equation*}
a_{3} = \frac{\alpha_{6} - \alpha_{5} - c^{+}}{2} = \frac{1 - 4 - 1}{2} = -2,
\end{equation*}

\begin{equation*}
b_{1} = \frac{\beta_{1} - \beta_{2}}{2} = \frac{3 - 5}{2} = -1,
\end{equation*}

\begin{equation*}
b_{2} = \frac{\beta_{2} - \beta_{3}}{2} = \frac{5 - 5}{2} = 0,
\end{equation*}

\begin{equation*}
b_{3} = \frac{\beta_{3} - \beta_{4}}{2} = \frac{5 - 3}{2} = 1.
\end{equation*}
Also, since $c^{+} = \max(c, 0) = \max(1, 0) = 1$, from Equation~(\ref{Dyn_twist_equ}), $$ |T| = \gamma - \frac{\beta_{4} - c^{+}}{2} - \frac{\beta_{1} - c^{+}}{2} = 3 - \frac{3 - 1}{2} - \frac{3 - 1}{2} = 1.$$ 
Since the twisting component twists in the negative direction, we derive $T = -1$.  Hence, we find 
$$\Phi(L) = (-2, -1, -2; -1, 0, 1; -1; 1).$$
\end{example}

The following theorem Theorem~\ref{inversion_formul} gives the \emph{inversion of Dynnikov coordinate function} on $S_n$:

\begin{theorem}\label{inversion_formul}
Let $(a; b; T; c) \in \Z^{2n+2} \setminus \cV_{n}$. Then, the vector $(a; b; T; c)$ corresponds to one and only one integral lamination $L \in \cL_n$ whose  intersection numbers are given by
\begin{equation}\label{beta_inv}
\beta_{i} = 2\sum_{j=i}^{n}b_{j} + \max(c^{+}, c^{+} - 2\sum_{i=1}^{n}b_{i}, \kappa), \quad ~\beta_{n+1} = \max(c^{+}, c^{+} - 2\sum_{i=1}^{n}b_{i}, \kappa)
\end{equation}

\begin{align}\label{alfa_inv}
\alpha_{i}& = \left\{ \begin{array}{ll}
          \frac{2(-1)^{i}a_{\lceil i/2 \rceil} + (-1)^{i}c^{+} + \beta_{\lceil i/2 \rceil}}{2}& \mbox{if $b_{\lceil i/2 \rceil} \geq 0$},\\
         \frac{2(-1)^{i}a_{\lceil i/2 \rceil} + (-1)^{i}c^{+} + \beta_{(1+\lceil i/2 \rceil)}}{2}& \mbox{if $b_{\lceil i/2 \rceil} \leq 0$}.   
         \end{array} \right.
\end{align}
\begin{center}
and
\end{center}
\begin{align}\label{gama_inv}
\gamma& = \left\{ \begin{array}{ll}
          |T| + \sum_{j=1}^{n}b_{j} + \max(c^{+}, c^{+} - 2\sum_{i=1}^{n}b_{i}, \kappa) - c^{+}& \mbox{if $c > 0$},\\
       |c| + \sum_{j=1}^{n}b_{j} + \max(c^{+}, c^{+} - 2\sum_{i=1}^{n}b_{i}, \kappa) - c^{+}& \mbox{if $c \leq 0$}.   
         \end{array} \right.
\end{align}
where 
\begin{equation*}
\kappa = \max_{1 \leq k \leq n}\left[2\max(b_{k}, 0) + |2a_{k} + c^{+}| - 2\sum_{j=k}^{n}b_{j}\right].
\end{equation*}

\end{theorem}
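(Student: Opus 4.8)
The plan is to establish Theorem~\ref{inversion_formul} by reversing, step by step, the derivations of Section~\ref{our_coordinates}, and then checking that the resulting curve system is a genuine integral lamination and is unique. Concretely, given $(a;b;T;c)\in\Z^{2n+2}\setminus\cV_n$, I would \emph{define} $\beta_{n+1}$, then each $\beta_i$, then each $\alpha_i$, and finally $\gamma$, exactly by the formulas in the statement, and then argue that (a) these numbers are nonnegative integers satisfying all the constraints a lamination must satisfy (the parity conditions of Lemma~\ref{odd_even}, the equalities of Lemma~\ref{equalities}, the triangle inequalities in each region, and the compatibility condition forbidding $c$-curves and twisting components simultaneously, Remark~\ref{not_c_cutting}), (b) from such a consistent system of intersection numbers one can actually build a lamination $L$ (existence), and (c) any two laminations with these intersection numbers and with the prescribed twist sign $T$ must be isotopic (uniqueness, cf.\ Remark~\ref{being_injective}).

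The first block of steps is purely arithmetic bookkeeping. I would verify integrality and nonnegativity of $\beta_{n+1}=\max(c^{+},c^{+}-2\sum b_i,\kappa)$: each of the three terms is a nonnegative integer once one checks that $|2a_k+c^{+}|$ has the right parity (it does, since $2a_k+c^{+}=\alpha_{2k}-\alpha_{2k-1}$ in the forward direction, but here I argue directly that $\kappa\equiv c^{+}\pmod 2$ because $2\max(b_k,0)$ and $2\sum_{j\ge k}b_j$ are even). Then $\beta_i=2\sum_{j\ge i}b_j+\beta_{n+1}$ inherits the same parity and, because $\beta_{n+1}\ge c^{+}-2\sum_{i=1}^n b_i$ and $\beta_{n+1}\ge c^{+}$, one gets $\beta_i\ge \max(c^{+},\beta_{i+1}-2b_i,\dots)\ge 0$ for all $i$; the precise inequalities needed are exactly the ones recorded in~(\ref{good_big_bad}) and~(\ref{bad_big_good}). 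Next, the $\alpha_i$ formula~(\ref{alfa_inv}) is set up so that $\alpha_{2i}+\alpha_{2i-1}$ equals $\beta_i$ or $\beta_{i+1}$ according to the sign of $b_i$ (matching Lemma~\ref{equalities}), and $\alpha_{2i}-\alpha_{2i-1}=2a_i+c^{+}$; nonnegativity of each $\alpha_i$ follows from $\beta_{\lceil i/2\rceil}\ge\kappa\ge|2a_i+c^{+}|+2\max(b_i,0)-2\sum_{j\ge i}b_j$, which is built into the definition of $\kappa$. Finally I would check the triangle inequalities in each of the small regions bounded by $\alpha_{2i-1},\alpha_{2i}$ and a $\beta$; these reduce to $|\alpha_{2i}-\alpha_{2i-1}|\le \beta_i,\beta_{i+1}\le \alpha_{2i}+\alpha_{2i-1}$ together with the matching parity, all of which fall out of the formulas above.

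For existence, I would use the combinatorial description of laminations on $S_n$ in terms of path components set up earlier: once a consistent nonnegative integer vector $(\alpha;\beta;\gamma;c)$ satisfying the constraints is given, Lemmas~\ref{cins}, \ref{lem_total_twist}, \ref{each_twist} and \ref{lem_ab_bel} reconstruct the number of each type of path component ($u_i^a,u_i^b$, left/right loops $b_i$ in each $U_i$; front/back genus $l,l'$ and twisting/$c$ components in $G$), and these component counts can be assembled and glued along the arcs $\beta_i$ in the unique compatible way — as in the disk case — to produce a representative $L$ whose intersection numbers are the given ones and whose twist sign is that of $T$. For uniqueness: two laminations producing the same $(a;b;T;c)$ produce the same $(\alpha;\beta;\gamma;c)$ (the formulas are functions), hence the same path-component counts in every region; two laminations with identical component counts in every $U_i$ and in $G$, and with the same twist direction, are isotopic, since within each region the arcs are determined up to isotopy by their endpoints and type, and the gluing across the $\beta_i$ is forced. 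This is where Remark~\ref{being_injective} is essential — without fixing the sign of $T$ the map is only a bijection after quotienting, which is exactly why the target is $\Z^{2n+2}\setminus\cV_n$ rather than all of $\Z^{2n+2}$.

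The main obstacle I expect is not any single inequality but the \emph{case analysis hidden inside $\kappa$ and $\beta_{n+1}$}: the forward derivation split into Case~1 ($\min_i m_i=0$) and Case~2 (all $m_i>0$, with sub-cases (i)--(iii) on the sign of $\sum b_i$), and in the reverse direction one must show that whichever of the three terms $c^{+}$, $c^{+}-2\sum b_i$, $\kappa$ realizes the max, the reconstructed system lands in the correct case and has all $m_i,l,l'\ge 0$ with at least one equal to zero (no boundary-parallel curves, Figures~\ref{bo_paralel} and~\ref{not_surjec}). Equivalently: I must prove that $\max(c^{+},c^{+}-2\sum b_i,\kappa)$ is simultaneously $\ge$ all the lower bounds coming from \emph{both} cases, so that the single closed formula~(\ref{final_beta}) is correct regardless of which regime the input $(a;b)$ belongs to. Handling the degenerate inputs — $c^{+}=0$ (no twisting, possibly $c$-curves, governed by~(\ref{c_egri_number})), $T=0$, and the zero vector — as boundary cases of the same formulas is a secondary but necessary piece of bookkeeping.
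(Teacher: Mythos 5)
Your proposal takes essentially the same route as the paper's own proof: invert the coordinate formulas to recover the intersection numbers, reconstruct the path-component counts in each $U_i$ and in $G$ via Lemmas~\ref{cins}, \ref{lem_total_twist}, \ref{each_twist} and \ref{lem_ab_bel}, and note that once the sign of $T$ is fixed the components glue uniquely up to isotopy (Remark~\ref{being_injective}), which gives both injectivity and surjectivity. If anything, your plan is more thorough than the paper's argument, which simply asserts that the path components ``can be drawn and joined together'' without carrying out the parity, nonnegativity, and case-analysis checks on $\kappa$ and $\beta_{n+1}$ that you rightly identify as the real work.
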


\begin{proof}
Let $L \in \cL_n$ be an integral lamination whose Dynnikov coordinates are $\Phi(L) = (a; b; T; c)$. ~Firstly, we shall show that Dynnikov function \linebreak $\Phi : \cL_{n} \rightarrow \Z^{2n+2} \setminus \cV_{n}$ is injective. It has been shown in this paper that the intersection numbers corresponding to the minimal representative $L \in \cL$  are as given in equations (\ref{beta_inv}), (\ref{alfa_inv}) and (\ref{gama_inv}). Then, the numbers of above, below, right loop or left loop components in each region $U_i$, the numbers of curves $c$, front genus, back genus, twisting components, the total twist of twisting components and the number of twists of each twisting component, the direction of these twists in the region $G$ are calculated as given in above, and therefore as indicated in Remark~\ref{being_injective}, the path components in regions $U_i$ and $G$ can be combined uniquely up to isotopy by giving a direction to the twists of twisting components. Hence, $\Phi $ is injective.

Now, we see that the function $\Phi \colon \cL_{n} \rightarrow \Z^{2n+2} \setminus \cV_{n}$ is surjective. Let \linebreak $(a; b; T; c)\in \Z^{2n+2} \setminus \cV_{n}$. 
We shall show that the intersection numbers $(\alpha; \beta; \gamma; c)$ defined by Equations (\ref{beta_inv}), (\ref{alfa_inv}) and (\ref{gama_inv}) correspond to an 
integral lamination $L \in \cL_n$ such that $\Phi(L) = (a; b; T; c)$. First of all, it is easy to see that  an integral lamination $L$ with intersection numbers $(\alpha; \beta; \gamma; c)$  
should satisfy $\Phi(L) = (a; b; T; c)$.  To get an integral lamination, we draw non-intersecting path components in each region and join them together.
\end{proof}

\begin{example}
Let the Dynnikov coordinates of the integral lamination $L \in \cL_{3}$ on $S_3$ be $ \Phi(L) = (a; b; T; c) = (-2, -2, -1; 0, -1, -1; -5; 3)$. We find the intersection numbers corresponding to the minimal representative $L$.

Since $c = 3 > 0$, $p(c) = 0$ and $c^{+} = 3$. From Theorem~\ref{inversion_formul}, the intersection numbers $\alpha$, $\beta$ and $\gamma$  are found as the following:

When we place the given Dynnikov coordinates  to the equation
\begin{equation*}
\kappa = \max_{1 \leq k \leq 3}\left[2\max(b_{k}, 0) + |2a_{k} + c^{+}| - 2\sum_{j=k}^{3}b_{j}\right],
\end{equation*}
we find $\kappa = 5$. From here,
\begin{align*}
\beta_{4} &= \max(c^{+}, c^{+} - 2\sum_{i=1}^{3}b_{i}, \kappa)\\
&= \max(3, 3 - 2(0 - 1 - 1), 5) = 7,
\end{align*}
that is, $\beta_{4} = 7$. From Equation~(\ref{beta_inversion}), we derive 
\begin{equation*}
\beta_{1} = 2(b_{1} + b_{2} + b_{3}) + \beta_{4} = 2(0 - 1 - 1) + 7 = 3.
\end{equation*}
By the similar calculations, we have $\beta_{2} = 3$ and $\beta_{3} = 5$. From Equation~(\ref{alfa_inv}), since $b_1 = 0$, $\alpha_1 = 2$ and  $\alpha_2 = 1$. Since $b_2 < 0$, $\alpha_3 = 3$ and  $\alpha_4 = 2$. Since $b_3 < 0$, $\alpha_5 = 3$ and  $\alpha_6 = 4$. Since $c > 0$, from Equation~(\ref{gama_inv}), 
\begin{align*}
\gamma &= |T| + \sum_{j=1}^{3}b_{j} + \max(c^{+}, c^{+} - 2\sum_{i=1}^{3}b_{i}, \kappa) - c^{+} \\
 &=  |T| + b_{1} + b_{2} + b_{3} + \max(c^{+}, c^{+} - 2(b_{1} + b_{2} + b_{3}), \kappa) - c^{+}\\
&=  5 + 0 - 1 - 1  + \max(3, 3 - 2(0 - 1 - 1), 5) - 3 = 7.
\end{align*}

Now, we calculate the numbers of path components in the regions $G$ and $U_i$.  Since $c^{+} = 3$ and $T = -5$, there are $3$ twisting components and the total number of twists is $5$ (see Remark~\ref{num_cutting} and Lemma~\ref{lem_total_twist}), and twisting components twist in the negative direction.  By Lemma~\ref{each_twist}, there are $2$ twisting components, which each twisting component does $2$ twists, and there is $1$ twisting component doing $1$ twist. According to Lemma~\ref{cins}, 
$$l = \frac{\beta_{4} - c^{+}}{2} = \frac{7 - 3}{2} = 2 \quad \mbox{ and } l' = \frac{\beta_{1} - c^{+}}{2} = \frac{3 - 3}{2} = 0.$$  That is, there are $2$ front genus components, however there is not any back genus component. 

 From Equation~(\ref{puncture_components}),
\begin{equation*}
b_{1} = \frac{\beta_{1} - \beta_{2}}{2} = \frac{3 - 3}{2} = 0.
\end{equation*}
Similarly, we have $b_2 = -1$ and $b_3 = -1$. Namely, there are no  loop components in region $U_1$, ~$1$ left loop component in region $U_2$ and $1$ left loop component in region $U_3$.

The numbers of above and below components in each $U_i$ are:

\begin{equation*}
u_{1}^{a} = \alpha_{1} - |b_{1}| = 2 - 0 = 2  \quad  ~\mbox{ and }~  \quad u_{1}^{b} = \alpha_{2} - |b_{1}| = 1 - 0 = 1.
\end{equation*}
That is, there are $2$ above components and $1$ below component in $U_1$.  By similar calculations, we find $2$ above and $1$ below components in $U_2$ and  $2$ above and $3$ below components in $U_3$.

The integral lamination $L$ in Figure~\ref{bad_exp_2} is derived uniquely by combining the calculated components  up to isotopy. 
\begin{figure}[!ht]
\centering
\includegraphics[width=0.58\textwidth]{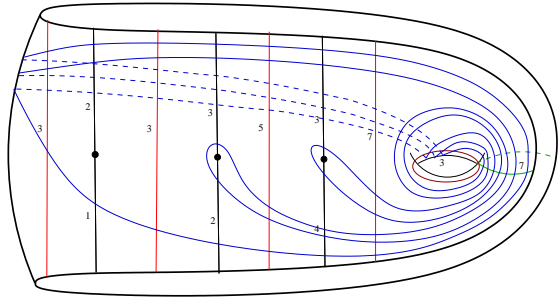}
\caption{ $ \Phi(L) = (a; b; T; c) = (-2, -2, -1; 0, -1, -1; -5; 3)$ }\label{bad_exp_2}
\end{figure}
\end{example}

\begin{remark}
The Dynnikov coordinates for integral laminations on $S_n$ obtained in this paper  can be extended in a natural way to Dynnikov coordinates of measured foliations on $S_n$. 
\end{remark}

\textbf{Acknowledgements:} The author would like to thank her advisor Saadet \"{O}yk\"{u} Yurtta\c{s} and co-advisor Semra Pamuk and point out that the results in this paper are part of author’s PhD thesis.

\bibliographystyle{amsplain}
\providecommand{\bysame}{\leavevmode\hbox
to3em{\hrulefill}\thinspace}
\providecommand{\MR}{\relax\ifhmode\unskip\space\fi MR }
\providecommand{\MRhref}[2]{%
  \href{http://www.ams.org/mathscinet-getitem?mr=#1}{#2}
} \providecommand{\href}[2]{#2}

\end{document}